\documentclass[amstex,16pt, amssymb]{article}
\usepackage{mathtext}
\usepackage[cp1251]{inputenc}
\usepackage[T2A]{fontenc}
\usepackage[dvips]{graphicx}
\usepackage{amsmath}
\usepackage{amssymb}
\usepackage{amsxtra}
\usepackage{latexsym}
\usepackage{ifthen}
\usepackage{amsthm}

\def\XXint#1#2#3{{\setbox0=\hbox{$#1{#2#3}{\int}$}
     \vcenter{\hbox{$#2#3$}}\kern-.5\wd0}}


\pagestyle{myheadings}

\theoremstyle{plain}
\newtheorem{theorem}{Theorem}[section]
\newtheorem{lemma}{Lemma}[section]
\newtheorem{proposition}{Proposition}[section]
\newtheorem{corollary}{Corollary}[section]

\theoremstyle{definition}
\newtheorem{example}{Example}[section]

\renewcommand{\abstract}{\textbf{Abstract. }\medskip}

\numberwithin{equation}{section}

\numberwithin{equation}{section}

\begin{document}

\title{Asymptotic behavior of solutions of the nonlinear Beltrami equation with the Jacobian}

\author{Igor Petkov, Ruslan Salimov, Mariia Stefanchuk}

\date{}

\maketitle

\begin{abstract}
We investigate the asymptotic behavior at infinity of regular homeomorphic solutions of the nonlinear Beltrami equation with the Jacobian 
on the right-hand side. The sharpness of the above bounds is illustrated by several examples. 
\end{abstract}

\bigskip

{\bf MSC 2020.} {Primary: 28A75, 30A10; Secondary: 26B15, 30C80}

\bigskip

{\bf Key words.} {Beltrami equations, nonlinear Beltrami equations, So\-bolev class, asymptotic behavior at infinity, regular  homeomorphism, angular dilatation, isoperimetric inequalities.}


\section{Introduction}


Nowadays various kinds of nonlinear counterparts of the classical Beltra\-mi equation attarct attention of many mathematicians. There are se\-veral tools for studying the main features of such equations and asymptotic behaviers of their solutions. The so-called directional dilatations and isoperimetric inequality allow us to establish some crutial properties of the regular solutions; see, e.g. \cite{AC}--\cite{BGMR13}.

Let $\mathbb{C}$ be the complex plane. In the complex notation $f=u+iv$ and $z=x+iy$, the {\it Beltrami equation} in a domain $G\subset\mathbb{C}$ has the form
\begin{equation}\label{eqB}
f_{\overline{z}}\,=\,\mu(z)f_{z},
\end{equation}
where $\mu\colon G\to \mathbb{C}$ is a measu\-rable function and
$$f_{\overline{z}}=\frac{1}{2}(f_{x}+if_{y})\qquad {\rm and} \qquad f_{z}=\frac{1}{2}(f_{x}-if_{y})$$
are formal derivatives of $f$ in $\overline{z}$ and $z$, while $f_{x}$ and $f_{y}$  are partial derivatives of $f$ in the variables $x$ and $y$, respectively.

Various existence theorems for solutions of the Sobolev class  have been recently established applying the modulus approach for a quite wide class of linear and quasilinear degenerate Beltrami equations; see, e.g. \cite{RSY}, \cite{GRSY12}--\cite{SY05}, \cite{Sev11}, \cite{AIM09}.

\medskip

Let $z_0\in \mathbb{C}$ and $\mathcal{K}_{z_0} \colon G \to\mathbb{C}$ be a measurable function. We consider the following equation

\begin{equation}\label{eqcomplexJ}
f_{\overline{z}}\,-\frac{z-z_0}{\overline{z-z_0}}\,f_{z}=\mathcal{K}_{z_0}(z) |J_f(z)|^{1/2} \,,
\end{equation}
where  $J_{f}(z)=|f_z|^{2}-|f_{\bar{z}}|^{2}$ is a Jacobian of $f$. 

Under $\mathcal{K}_{z_0} (z)\equiv0$ equation (\ref{eqcomplexJ}) reduces to the standard linear Beltrami equation \eqref{eqB} with the complex coefficient
$\mu(z)\,=\,\frac{z-z_0}{\overline{z-z_0}}.$
In other cases,  the equation (\ref{eqcomplexJ}) provides a partial case of the general nonlinear system of equations (7.33) given in \cite[Sect.~7.7]{AIM09}.

Applying  the formal derivatives
\begin{equation}\label{eqzz}
rf_{r}\,=\,(z-z_0)f_{z}+\left(\overline{z-z_0}\right)f_{\overline{z}}\,, \quad f_{\theta}\,=\,i((z-z_0)f_{z}-\left(\overline{z-z_0}\right)f_{\overline{z}})\,,
\end{equation}
see, e.g. \cite[(21.51)]{AIM09}, one can rewrite the equation \eqref{eqcomplexJ}   in the polar coordinates $(r,\theta)$ $(z=z_0+re^{i\theta})$:
\begin{equation}\label{eqJac}f_{\theta}=\sigma_{z_0}|J_{f}|^{1/2}\end{equation}
with \begin{equation}\label{sigmaK}\sigma_{z_0}=\sigma_{z_0}(z)=-i\mathcal{K}_{z_0} (z)(\overline{z-z_0})
\end{equation}
and 
\begin{equation}\label{Jacob}
J_{f}=J_{f}(z_0+re^{i\theta})=\frac{1}{r} \, {\rm Im}\, \left(\overline{f_r}\, f_\theta \right),
\end{equation}
where $f_{\theta}$ and $f_r$ are the partial derivatives of $f$ by $\theta$ and $r$, respectively, see, e.g. \cite[(21.52)]{AIM09}. 

Next, in the case $z_0=0$ we put $\mathcal{K}_{z_0}(z)=\mathcal{K}(z)$ and $\sigma_{z_0}(z)=\sigma(z)$.

\begin{example}
Let $A$, $B$, $C$ are complex numbers and $|A|\neq|B|$. Note that the linear mapping 
$$f(z)=A\overline{z}+Bz+C$$
is the solution of the equation \eqref{eqcomplexJ} with 
$$\mathcal{K}_{z_0}(z)=\frac{A(\overline{z-z_0})-B(z-z_0)}{|\Delta|^{\frac{1}{2}}(\overline{z-z_0})},$$
where $\Delta=|B|^{2}-|A|^{2}\neq0$.
\end{example}

\begin{example}
Let us consider the following area preserving quasiconfor\-mal mapping
$$f(z)=ze^{2i\ln|z|}$$
in $\mathbb{B}$. We see that 
$$f_{z}=(1+i)e^{2i\ln|z|}, \quad f_{\overline{z}}=\frac{iz}{\overline{z}}\,e^{2i\ln|z|},$$
and therefore the straight forward computation shows that
$$J_f=|f_z|^{2}-|f_{\bar{z}}|^{2}=|1+i|^{2}-1=1$$
for $z\in\mathbb{B}$.

Thus, it is obvious that the mapping $f$ is a solution of the equation 
$$f_{\overline{z}}\,-\frac{z}{\overline{z}}\,f_{z}=\mathcal{K}(z) |J_f(z)|^{1/2}$$
with $\mathcal{K}(z)=-\frac{z}{\overline{z}}\,e^{2i\ln|z|}$.
\end{example}

Note that the equation \eqref{eqcomplexJ} can be written in the form of a system of two real partial differential equations
$$
\begin{cases}(y-y_{0})u_{x}-(x-x_{0})u_{y}=k_{1}|J_f|^{1/2}, \\ (y-y_{0})v_{x}-(x-x_{0})v_{y}=k_{2}|J_f|^{1/2},
\end{cases}
$$
where $k_{1}=-{\rm Im}((\overline{z-z_{0}})\,\mathcal{K}_{z_0}(z))$, $k_{2}={\rm Re}((\overline{z-z_{0}})\,\mathcal{K}_{z_0}(z))$, $z=x+iy$ and $z_{0}=x_{0}+iy_{0}$. 

\medskip

The nonlinear equation (\ref{eqcomplexJ}) provides  partial cases of the nonlinear system of two real partial differential equations; see (1) in \cite{LSh57}, \cite{Lavr47}, cf. \cite{Lavr62}. Note that various nonlinear systems of partial differential equations studied in a quite large specter of aspects can be found in \cite{GSS}--\cite{SalSt4}, \cite{AIM09}--\cite{CGPSS18}.

\section{Auxiliary Results}

A mapping $f \colon G \to \mathbb C$ is called
\emph{regular at a point} $z_0\in G,$ if $f$ has the total differential at this point and its Jacobian
$J_f=|f_z|^{2}-|f_{\bar{z}}|^{2}$ does not vanish, cf. \cite[I.~1.6]{LV73}. A homeomorphism $f$ of Sobolev class $W_{\rm{loc}}^{1, 1}$ is called \emph{regular}, if $J_{f}>0$ a.e. (almost everywhere). 
By a {\it regular homeomorphic solution of equation} (\ref{eqcomplexJ}) we call a regular homeomorphism $f \colon G \to \mathbb C,$ which satisfies (\ref{eqcomplexJ}) a.e. in $G.$

Later on we use the following notations
\begin{equation*}
B(z_0,r)=\{z\in\mathbb C: |z-z_0|< r\} \quad {\rm and} \quad \gamma(z_0,r)=\{z\in\mathbb C:
|z-z_0|=r\}.
\end{equation*}
Given a set $E\in\mathbb{C}$, $|E|$ denotes the two dimensional Lebesgue measure of a set $E$. We denote by $S_f(z_0, r)=|f(B(z_0,r))|.$ 

The mapping  $f: G \to \mathbb C $ has
the {\it N-property} (by Luzin), if the condition $|E|=0$ implies that $|f(E)|=0$.

Let $G$ be a domain in $\mathbb{C}$. Let $f:G\to  \mathbb{C}$ be a regular homeomorphism of the Sobolev class $W_{\rm loc }^{1,1}$. Given a point 
$z_0\in G$, the  {\it angular dilatation}
of $f$ with respect to $z_0$ is the function

$$D_{f}(z,z_0) =  \frac{|f_{\theta}(z_0+ re^{i\theta})|^2}{r^2 J_f(z_0+re^{i\theta})}\,,$$
where $z =z_0+ re^{i\theta}$ and  $J_f$ is the Jacobian of $f$, see the equation (3.10) in \cite{BGMR13}. 

For $D_{f}(z,z_0)$ denote

\begin{equation}\label{defd_p}
d_{f}(z_0,r)= \frac{1}{2\pi r} \int\limits_{\gamma(z_0,r)}\, D_{f}(z,z_0) \, |dz|\,.
\end{equation}

\begin{proposition}\label{pr1}
Let $f\colon  \mathbb C\to \mathbb
C$ be a regular homeomorphism of the Sobolev class $W^{1, 1}_{\rm
loc}$ that possesses the $N$-property, $z_0\in \mathbb{C}$. Then
\begin{equation}\label{eqlem1}
S_{f}^\prime(z_0,r)\geqslant 2 \,\frac{S_{f}(z_0,r)}{r \,  d_{f}(z_0,r)}
\end{equation}
for a.a. (almost all) $r\in (0,+\infty)$.
\end{proposition}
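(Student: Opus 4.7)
The plan is to combine the classical isoperimetric inequality on the plane with a Cauchy--Schwarz estimate, thereby relating the area function $S_f(z_0,r)$, its derivative, and the integral average $d_f(z_0,r)$.

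First I would translate $S_f(z_0,r)$ into an integral of the Jacobian. Since $f$ is a regular $W^{1,1}_{\mathrm{loc}}$-homeomorphism with the $N$-property, the change of variables formula applies and yields
\begin{equation*}
S_f(z_0,r)\,=\,|f(B(z_0,r))|\,=\,\int_{B(z_0,r)} J_f(z)\,dA(z)\,=\,\int_0^r\!\!\int_0^{2\pi} J_f(z_0+\rho e^{i\theta})\,\rho\,d\theta\,d\rho.
\end{equation*}
Because $r\mapsto S_f(z_0,r)$ is monotone, it is differentiable for a.a. $r$, and the above representation gives
\begin{equation*}
S_f^{\prime}(z_0,r)\,=\,r\int_0^{2\pi} J_f(z_0+re^{i\theta})\,d\theta
\end{equation*}
for a.a. $r\in(0,+\infty)$.

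Next I would estimate $S_f(z_0,r)$ from above using the boundary curve. For a.a.\ $r$ the restriction $\theta\mapsto f(z_0+re^{i\theta})$ is absolutely continuous (by the ACL characterization of $W^{1,1}_{\mathrm{loc}}$ in polar coordinates around $z_0$), so $f(\gamma(z_0,r))$ is a rectifiable Jordan curve of length
\begin{equation*}
\ell_f(z_0,r)\,=\,\int_0^{2\pi}|f_\theta(z_0+re^{i\theta})|\,d\theta,
\end{equation*}
enclosing the Jordan domain $f(B(z_0,r))$. The classical isoperimetric inequality therefore yields
\begin{equation*}
4\pi\,S_f(z_0,r)\,\leqslant\,\ell_f(z_0,r)^2.
\end{equation*}

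The third step is the Cauchy--Schwarz trick: writing $|f_\theta|=\bigl(|f_\theta|/(r\sqrt{J_f})\bigr)\cdot r\sqrt{J_f}$ on the set where $J_f>0$ (which is a.e.\ by regularity), I get
\begin{equation*}
\ell_f(z_0,r)^2\,\leqslant\,\int_0^{2\pi}\frac{|f_\theta|^2}{r^2 J_f}\,d\theta\;\cdot\;\int_0^{2\pi} r^2 J_f\,d\theta.
\end{equation*}
The first factor equals $\int_0^{2\pi} D_f(z,z_0)\,d\theta=\frac{1}{r}\int_{\gamma(z_0,r)} D_f(z,z_0)\,|dz|=2\pi\,d_f(z_0,r)$ by the definition \eqref{defd_p}, and the second factor equals $r\,S_f^{\prime}(z_0,r)$ by Step 1. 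Combining with the isoperimetric bound gives
\begin{equation*}
4\pi\,S_f(z_0,r)\,\leqslant\,2\pi\,d_f(z_0,r)\cdot r\,S_f^{\prime}(z_0,r),
\end{equation*}
which rearranges into the desired inequality \eqref{eqlem1}.

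The only delicate point is justifying the inputs: the representation $S_f(z_0,r)=\int_{B(z_0,r)}J_f\,dA$ needs the $N$-property together with $W^{1,1}_{\mathrm{loc}}$-regularity, and the length formula for $f(\gamma(z_0,r))$ relies on ACL in polar coordinates holding for a.a.\ $r$. Both are standard consequences of the hypotheses; once they are in place, the remainder is a direct Cauchy--Schwarz plus isoperimetric computation.
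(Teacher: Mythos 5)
Your proposal is correct and follows essentially the same route as the paper: the Fubini/N-property representation $S_f'(z_0,r)=r\int_0^{2\pi}J_f\,d\theta$, the Cauchy--Schwarz splitting of $|f_\theta|$ into $D_f^{1/2}\cdot r J_f^{1/2}$, and the planar isoperimetric inequality, combined in the same order. The bookkeeping with $d_f(z_0,r)$ checks out, so there is nothing to add.
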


\begin{proof} Denote by $L_{f}(z_0, r)$ the length of curve $f(z_0+re^{i\theta}),$ $0\leqslant\theta\leqslant 2\pi.$ For a.a. $r\in (0,+\infty),$
\begin{equation*}
L_{f}(z_0, r)=\int\limits_{0}^{2\pi}|f_{\theta}(z_0+re^{i\theta})|\,d\theta=\int\limits_{0}^{2\pi}
 D_f^\frac{1}{2}(z_0+re^{i\theta},z_0)
J_f^\frac{1}{2}(z_0+re^{i\theta})\, r\,d\theta\,,
\end{equation*}
and by H\"{o}lder's inequality,
\begin{equation}\label{eqlem11}
L_{f}^{2}(z_0,r)\,\leqslant\,
\int\limits_{0}^{2\pi}D_f(z_0+re^{i\theta},z_0)\, r\,d\theta
\int\limits_{0}^{2\pi}J_f(z_0+re^{i\theta})\, r\,d\theta\,.
\end{equation}

Due to Lusin's $N$-property and the Fubini theorem,
\begin{equation*}
S_{f}(z_0, r)=\int\limits_{B(z_0,r)}J_{f}(z)\,dxdy=\int\limits_{0}^{r}\int\limits_{0}^{2\pi}J_{f}(z_0+\rho e^{i\theta})\rho\,d\theta\,  d\rho\,,
\end{equation*}
hence, for a.a. $r\in (0,+\infty)$
\begin{equation*}
S_{f}^\prime(z_0, r)\,=\,\int\limits_{0}^{2\pi}
J_{f}(z_0+re^{i\theta}) \, r\,d\theta\,.
\end{equation*}
Estimating the last integral by (\ref{eqlem11}) and using (\ref{defd_p}), one obtains
\begin{equation}\label{eqlem12}
S_{f}^\prime(z_0,r)\,\geqslant\,\frac{L_{f}^2(z_0,r)}{2\pi r\,d_f(z_0,r)}
\end{equation}
for a.a. $r\in (0,+\infty)$. Combining (\ref{eqlem12}) with the planar isoperimetric inequality
$$L_{f}^2(z_0,r)\geqslant 4\pi S_{f}(z_0,r)$$
implies the desired relation (\ref{eqlem1}).
\end{proof}

\begin{proposition}\label{pr2}
Let $f\colon  \mathbb C\to \mathbb
C$ be a regular homeomorphic solution of the equation (\ref{eqcomplexJ}) which belongs to  Sobolev class $W^{1, 2}_{\rm
loc}$. Then
\begin{equation}\label{diffeq}
S_{f}^\prime(z_0,r)\geqslant 2 \,\frac{S_{f}(z_0,r)}{r \,  \kappa(z_0,r)}
\end{equation}
for a.a.  $r\in (0,+\infty)$ and here $$\kappa(z_0,r)=\frac{1}{2\pi r} \, \int\limits_{\gamma(z_0, r)} | \mathcal{K}_{z_0} (z)|^2 \, |dz|.$$
\end{proposition}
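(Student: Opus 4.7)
The plan is to reduce Proposition \ref{pr2} to Proposition \ref{pr1} by identifying the angular dilatation $D_{f}(z,z_0)$ with $|\mathcal{K}_{z_0}(z)|^{2}$, so that averaging over the circle $\gamma(z_{0},r)$ turns $d_{f}(z_{0},r)$ into $\kappa(z_{0},r)$.

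First I would check that the hypotheses of Proposition \ref{pr1} are in fact satisfied. The only extra condition there is the Lusin $N$-property, which for homeomorphisms of planar domains is automatic in $W^{1,2}_{\rm loc}$ by the classical theorem of Ponomarev (in general dimension $n$, $W^{1,n}$-homeomorphisms enjoy the $N$-property). Regularity ($J_{f}>0$ a.e.) is assumed, so in particular $|J_{f}|=J_{f}$ almost everywhere.

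Next I would exploit the polar form \eqref{eqJac} of the equation. Since $f$ solves \eqref{eqcomplexJ} a.e., the identities \eqref{eqzz} give $f_{\theta}=\sigma_{z_{0}}|J_{f}|^{1/2}$ a.e., with $\sigma_{z_{0}}(z)=-i\mathcal{K}_{z_{0}}(z)\overline{(z-z_{0})}$ from \eqref{sigmaK}. Therefore, for a.e. $z=z_{0}+re^{i\theta}$,
\begin{equation*}
|f_{\theta}(z_{0}+re^{i\theta})|^{2}=|\sigma_{z_{0}}(z)|^{2}J_{f}(z)=|\mathcal{K}_{z_{0}}(z)|^{2}\,r^{2}\,J_{f}(z),
\end{equation*}
which yields the pointwise identification
\begin{equation*}
D_{f}(z,z_{0})=\frac{|f_{\theta}|^{2}}{r^{2}J_{f}}=|\mathcal{K}_{z_{0}}(z)|^{2}
\end{equation*}
on the set where $J_{f}>0$, hence for a.a. $z$. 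By the Fubini theorem this identity holds a.e.\ on $\gamma(z_{0},r)$ for a.a. $r\in(0,+\infty)$.

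Finally, I would integrate this equality over $\gamma(z_{0},r)$: from the definition \eqref{defd_p},
\begin{equation*}
d_{f}(z_{0},r)=\frac{1}{2\pi r}\int\limits_{\gamma(z_{0},r)}D_{f}(z,z_{0})\,|dz|=\frac{1}{2\pi r}\int\limits_{\gamma(z_{0},r)}|\mathcal{K}_{z_{0}}(z)|^{2}\,|dz|=\kappa(z_{0},r)
\end{equation*}
for a.a. $r$. Substituting into \eqref{eqlem1} of Proposition \ref{pr1} gives exactly \eqref{diffeq}. The only step that is not a direct calculation is verifying the $N$-property in $W^{1,2}_{\rm loc}$ and justifying that the a.e. identity $f_{\theta}=\sigma_{z_{0}}|J_{f}|^{1/2}$ can indeed be read off from \eqref{eqzz} on almost every circle (via Fubini on polar coordinates); both are standard facts for Sobolev homeomorphisms in the plane and should be stated briefly rather than proved in detail.
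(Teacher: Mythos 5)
Your proposal is correct and follows essentially the same route as the paper: verify the $N$-property for $W^{1,2}_{\rm loc}$ homeomorphisms (the paper cites Corollary~B of Mal\'y--Martio rather than Ponomarev, but it is the same fact), use the polar form \eqref{eqJac} together with \eqref{sigmaK} to identify $D_{f}(z,z_{0})=|\mathcal{K}_{z_0}(z)|^{2}$, conclude $d_{f}(z_{0},r)=\kappa(z_{0},r)$, and apply Proposition~\ref{pr1}. No substantive differences.
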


\begin{proof} We note that, according to Corollary B in \cite{MM}, the homeomor\-phism $W_{\rm loc }^{1,2}$
possesses the $N$-property. Since $f$ is a regular homeomor\-phic solution of equation  (\ref{eqcomplexJ}), we get

$$ D_f(z,z_0)= \frac{|f_{\theta}(z_0+ re^{i\theta})|^2}{r^2 J_f(z_0+re^{i\theta})}=\frac{|\sigma_{z_0}(z_0+ re^{i\theta})|^2}{r^2}\,, $$
where $z =z_0+ re^{i\theta}$.

Next,  due to the   relation (\ref{sigmaK}),  we obtain   $$D_f(z,z_0)= |\mathcal{K}_{z_0} (z)|^2\,$$
and
$$ d_{f}(z_0,r)= \frac{1}{2\pi r} \int\limits_{\gamma(z_0,r)}\, D_{f}(z,z_0) \, |dz|= \frac{1}{2\pi r} \int\limits_{\gamma(z_0,r)}\, |\mathcal{K}_{z_0} (z)|^2 \, |dz|\,. $$
Thus,  applying Proposition \ref{pr1}, we obtain (\ref{diffeq}).

\end{proof}

\begin{lemma}\label{mesrR} Let $f\colon  \mathbb C\to \mathbb
C$ be a regular homeomorphic solution of the equation (\ref{eqcomplexJ}) which belongs to  Sobolev class $W^{1, 2}_{\rm
loc}$. Then 

\begin{equation}
S_{f}(z_0,r_0) \leqslant S_{f}(z_0,R) \, \exp\left(-  2 \int\limits_{r_0}^R \,\frac{dr}{r \,  \kappa(z_0,r)}\right)\,
\end{equation} 
for all $R>r_{0}>0$.

\end{lemma}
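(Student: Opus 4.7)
The plan is to deduce this bound by integrating the differential inequality provided by Proposition \ref{pr2}. Since Proposition \ref{pr2} gives
$$S_{f}^{\prime}(z_0,r)\,\geqslant\,\frac{2}{r\,\kappa(z_0,r)}\,S_{f}(z_0,r)\qquad\text{for a.a. }r\in(0,+\infty),$$
and since $f$ is a homeomorphism so that $f(B(z_0,r))$ is a nonempty open set for every $r>0$ (hence $S_f(z_0,r)>0$), I would divide by $S_{f}(z_0,r)$ to obtain the logarithmic derivative inequality
$$\frac{d}{dr}\ln S_{f}(z_0,r)\,\geqslant\,\frac{2}{r\,\kappa(z_0,r)}\qquad\text{for a.a. }r\in(0,+\infty).$$

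Next I would integrate this inequality from $r_0$ to $R$. The point that requires a little care is that we cannot invoke the fundamental theorem of calculus directly, since $\ln S_{f}(z_0,\cdot)$ is only known to be monotone; however, the function $r\mapsto S_{f}(z_0,r)$ is monotone nondecreasing as the measure of the image of an expanding family of balls under a homeomorphism, and therefore so is $\ln S_{f}(z_0,\cdot)$. By Lebesgue's classical theorem for monotone functions, $\ln S_{f}(z_0,\cdot)$ is differentiable a.e.\ and its derivative satisfies
$$\int\limits_{r_0}^{R}\frac{d}{dr}\ln S_{f}(z_0,r)\,dr\,\leqslant\,\ln S_{f}(z_0,R)-\ln S_{f}(z_0,r_0).$$
Combining this with the pointwise lower bound yields
$$2\int\limits_{r_0}^{R}\frac{dr}{r\,\kappa(z_0,r)}\,\leqslant\,\ln\frac{S_{f}(z_0,R)}{S_{f}(z_0,r_0)}.$$

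Finally, exponentiating both sides and rearranging gives
$$S_{f}(z_0,r_0)\,\leqslant\,S_{f}(z_0,R)\,\exp\left(-2\int\limits_{r_0}^{R}\frac{dr}{r\,\kappa(z_0,r)}\right),$$
which is the desired bound. The only genuine subtlety is the inequality (rather than equality) in the integration step: because monotone functions may have singular parts, one obtains only the one-sided estimate $\int F'\leqslant F(R)-F(r_0)$, but this goes precisely in the direction we need. Everything else is routine manipulation of the conclusion of Proposition \ref{pr2}.
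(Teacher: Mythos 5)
Your proposal is correct and follows essentially the same route as the paper: integrate the differential inequality from Proposition \ref{pr2}, use the monotonicity of $r\mapsto \ln S_f(z_0,r)$ together with the one-sided estimate $\int_{r_0}^{R}\psi'(r)\,dr\leqslant\psi(R)-\psi(r_0)$ for monotone functions (the paper cites Theorem IV.7.4 of Saks for exactly this), and exponentiate. You correctly identified the only genuine subtlety, namely that the integration step yields an inequality rather than an equality, and that it goes in the needed direction.
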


\begin{proof}
Let $0<r_{0}<R<\infty$. By Proposition \ref{pr2}, for a.a. $r\in (0,+\infty)$
\begin{equation*}
\frac{S_{f}^\prime(z_0,r)}{S_{f}(z_0,r)}\,dr\geqslant
2\, \frac{dr}{r \,  \kappa(z_0,r)}
\end{equation*}
and integrating over the segment $[r_{0},R],$ we obtain
\begin{equation*}
\int\limits_{r_{0}}^{R}\frac{S_{f}^\prime(z_0,r)}{S_{f}(z_0,r)}\,dr\geqslant
2\int\limits_{r_{0}}^{R}\frac{dr}{r \,  \kappa(z_0,r)}.
\end{equation*}
Hence,
\begin{equation*}
\int\limits_{r_{0}}^{R}\left(\ln S_{f}(z_0,r)\right)^\prime dr\geqslant
2\int\limits_{r_{0}}^{R}\frac{dr}{r \,  \kappa(z_0,r)}.
\end{equation*}
Note that the function $\psi(r)=\ln S_{f}(z_0,r)$ is nondecreasing on $(0,+\infty)$, and
\begin{equation*}
\int\limits_{r_{0}}^{R}\left(\ln S_{f}(z_0,r)\right)^\prime dr=
\int\limits_{r_{0}}^{R}\psi^\prime(r) dr\leqslant \psi(R) - \psi(r_{0})=\ln\frac{S_{f}(z_0,R)}{S_{f}(z_0,r_{0})},
\end{equation*}
see Theorem IV.7.4 in \cite{Saks64}.
Combining the last two inequalities, we have
\begin{equation*}
\ln\frac{S_{f}(z_0,R)}{S_{f}(z_0,r_{0})}\geqslant2\int\limits_{r_{0}}^{R}\frac{dr}{r \,  \kappa(z_0,r)}.
\end{equation*}
Thus, we obtain that 
\begin{equation*}
S_{f}(z_0,r_0) \leqslant S_{f}(z_0,R) \, \exp\left(-  2 \int\limits_{r_0}^R \,\frac{dr}{r \,  \kappa(z_0,r)}\right).
\end{equation*} 
The lemma is proved.
\end{proof}

\section{Asymptotic behavior at infinity of regular ho\-meomorphic solutions}

Here we study an asymptotic behavior at infinity of regular homeomor\-phic solutions of the equation (\ref{eqcomplexJ}).

\begin{theorem}\label{th1} Let $f\colon  \mathbb C\to \mathbb
C$ be a regular homeomorphic solution of the equation (\ref{eqcomplexJ}) which belongs to  Sobolev class $W^{1, 2}_{\rm
loc}$, $r_0>0$. Then 
\begin{equation}\label{eq11}
\liminf\limits_{R\to \infty}M_{f}(z_0,R)\exp\left(-  \int\limits_{r_0}^R \,\frac{dr}{r \,  \kappa(z_0,r)}\right)\geqslant m_{f}(z_0,r_0)>0,
\end{equation} 
where $$M_{f}(z_0,R)=\max\limits_{|z-z_0|=R}|f(z)-f(z_0)|$$ and $$m_{f}(z_0,r_0)=\min\limits_{|z-z_0|=r_0}|f(z)-f(z_0)|.$$
\end{theorem}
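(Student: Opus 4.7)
The plan is to convert the area-estimate of Lemma \ref{mesrR} into a linear estimate by sandwiching $S_f(z_0,\cdot)$ between the areas of the inscribed and circumscribed disks of $f(\gamma(z_0,\cdot))$ and taking square roots.

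First, I would exploit the homeomorphism property to compare $S_f(z_0,r)$ with $m_f(z_0,r)$ and $M_f(z_0,r)$. Since $f$ is a homeomorphism of $\mathbb C$, the set $f(B(z_0,\rho))$ is the bounded complementary component of the Jordan curve $f(\gamma(z_0,\rho))$. For $\rho=r_0$, every point of $f(\gamma(z_0,r_0))$ lies outside the open disk $B(f(z_0),m_f(z_0,r_0))$; this disk is connected, contains $f(z_0)$, and is disjoint from $f(\gamma(z_0,r_0))$, so it lies in the bounded component, i.e.
\[
B(f(z_0),m_f(z_0,r_0))\subset f(B(z_0,r_0)),\qquad S_f(z_0,r_0)\geqslant \pi\,m_f(z_0,r_0)^{2}.
\]
Symmetrically, for $\rho=R$, the open set $\mathbb C\setminus\overline{B(f(z_0),M_f(z_0,R))}$ is connected, unbounded, and disjoint from $f(\gamma(z_0,R))$, hence lies in the unbounded component; therefore
\[
f(B(z_0,R))\subset \overline{B(f(z_0),M_f(z_0,R))},\qquad S_f(z_0,R)\leqslant \pi\,M_f(z_0,R)^{2}.
\]

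Next, I would substitute these two bounds into Lemma \ref{mesrR} to obtain, for every $R>r_0$,
\[
\pi\,m_f(z_0,r_0)^{2}\leqslant \pi\,M_f(z_0,R)^{2}\,\exp\!\left(-2\int\limits_{r_0}^{R}\frac{dr}{r\,\kappa(z_0,r)}\right),
\]
and taking square roots yields
\[
m_f(z_0,r_0)\leqslant M_f(z_0,R)\,\exp\!\left(-\int\limits_{r_0}^{R}\frac{dr}{r\,\kappa(z_0,r)}\right).
\]
Passing to $\liminf_{R\to\infty}$ gives the desired inequality in \eqref{eq11}. The positivity $m_f(z_0,r_0)>0$ is immediate: $f$ is a homeomorphism, so $|f(z)-f(z_0)|$ is continuous and strictly positive on the compact circle $\gamma(z_0,r_0)$, hence attains a positive minimum.

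The only nontrivial step is the topological sandwich for $S_f$ above; the rest is a direct repackaging of Lemma \ref{mesrR}. I would therefore structure the write-up as (i) the two Jordan-curve containments, (ii) insertion into Lemma \ref{mesrR}, (iii) $\liminf$ and the remark on positivity of $m_f(z_0,r_0)$.
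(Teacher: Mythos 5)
Your proposal is correct and follows essentially the same route as the paper: both apply Lemma \ref{mesrR} and then sandwich $S_f(z_0,r_0)$ and $S_f(z_0,R)$ between $\pi m_f^2(z_0,r_0)$ and $\pi M_f^2(z_0,R)$ before taking square roots and passing to the lower limit. The only difference is that you spell out the Jordan-curve containments which the paper compresses into the single phrase ``since $f$ is homeomorphism,'' so your write-up is just a more detailed version of the same argument.
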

\begin{proof}
By Lemma \ref{mesrR}, we have 
$$S_{f}(z_0,r_0) \leqslant S_{f}(z_0,R) \, \exp\left(-  2 \int\limits_{r_0}^R \,\frac{dr}{r \,  \kappa(z_0,r)}\right)$$
for all $R>r_{0}>0$. Since $f$ is homeomorphism, we obtain  
$$\pi\, m_{f}^{2}(z_0,r_0)\leqslant S_{f}(z_0,r_0) \leqslant S_{f}(z_0,R) \, \exp\left(-  2 \int\limits_{r_0}^R \,\frac{dr}{r \,  \kappa(z_0,r)}\right)
\leqslant$$
$$\leqslant \pi\, M_{f}^{2}(z_0,R) \, \exp\left(-  2 \int\limits_{r_0}^R \,\frac{dr}{r \,  \kappa(z_0,r)}\right).$$
Hence, 
$$m_{f}(z_0,r_0)\leqslant M_{f}(z_0,R) \, \exp\left(-  \int\limits_{r_0}^R \,\frac{dr}{r \,  \kappa(z_0,r)}\right).$$
Finally, passing to the lower limit as $R\to \infty$ in the last inequality, we obtain the relation \eqref{eq11}.
\end{proof}

\begin{corollary}\label{cor1} Let $f\colon  \mathbb C\to \mathbb
C$ be a regular homeomorphic solution of the equation (\ref{eqcomplexJ}) which belongs to  Sobolev class $W^{1, 2}_{\rm
loc}$ and $\alpha>0$. If $\kappa(z_0,r)\leqslant\alpha$ for a.a. $r\geqslant 1$, then 
$$
\liminf\limits_{R\to \infty}\frac{M_{f}(z_0,R)}{R^{1/\alpha}}\geqslant m_{f}(z_0,1)>0,
$$
where $m_{f}(z_0,1)=\min\limits_{|z-z_0|=1}|f(z)-f(z_0)|.$
\end{corollary}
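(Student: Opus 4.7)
The plan is to derive this corollary as a direct consequence of Theorem \ref{th1}. The theorem provides the general lower bound
$$\liminf_{R\to\infty} M_{f}(z_0,R) \exp\!\left(-\int_{r_{0}}^{R}\frac{dr}{r\,\kappa(z_0,r)}\right) \geqslant m_{f}(z_0,r_{0}) > 0,$$
so the only thing left to do is to specialize $r_{0}=1$ and replace the exponential factor by the explicit power $R^{-1/\alpha}$ using the hypothesis on $\kappa$.

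First I would write the conclusion of Theorem \ref{th1} with $r_{0}=1$. Next, using the assumption $\kappa(z_0,r)\leqslant\alpha$ for almost all $r\geqslant 1$, I would estimate
$$\int_{1}^{R}\frac{dr}{r\,\kappa(z_0,r)}\,\geqslant\,\frac{1}{\alpha}\int_{1}^{R}\frac{dr}{r}\,=\,\frac{\ln R}{\alpha},$$
which yields
$$\exp\!\left(-\int_{1}^{R}\frac{dr}{r\,\kappa(z_0,r)}\right)\leqslant R^{-1/\alpha}.$$
Since $M_{f}(z_0,R)\geqslant 0$, multiplying the previous inequality by $M_{f}(z_0,R)$ preserves the direction and gives
$$\frac{M_{f}(z_0,R)}{R^{1/\alpha}}\,\geqslant\, M_{f}(z_0,R)\,\exp\!\left(-\int_{1}^{R}\frac{dr}{r\,\kappa(z_0,r)}\right).$$

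Finally, passing to $\liminf_{R\to\infty}$ on both sides and invoking Theorem \ref{th1} on the right-hand side delivers the desired bound $\liminf_{R\to\infty}M_{f}(z_0,R)/R^{1/\alpha}\geqslant m_{f}(z_0,1)>0$. There is no genuine obstacle here; the only point to handle carefully is tracking the direction of the inequality after taking reciprocals of $\kappa$ and after applying the monotone decreasing function $t\mapsto e^{-t}$, so that the estimate ends up in the correct direction to be combined with the liminf assertion of Theorem \ref{th1}.
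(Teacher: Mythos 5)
Your proposal is correct and follows exactly the route the paper intends: specialize Theorem \ref{th1} to $r_{0}=1$, bound $\int_{1}^{R}\frac{dr}{r\,\kappa(z_0,r)}\geqslant\frac{1}{\alpha}\ln R$ from the hypothesis $\kappa(z_0,r)\leqslant\alpha$, and conclude $\exp\left(-\int_{1}^{R}\frac{dr}{r\,\kappa(z_0,r)}\right)\leqslant R^{-1/\alpha}$ before passing to the lower limit. This is the same computation the authors carry out explicitly in the proof of Theorem \ref{th2} for the iterated-logarithm case, so there is nothing to add.
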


\begin{example}
Assume that $\alpha>0$. Consider the equation
\begin{equation}\label{1111}
f_{\overline{z}}\,-\frac{z}{\overline{z}}\,f_{z}=-\alpha^{\frac{1}{2}}\frac{z}{\overline{z}}\, |J_f(z)|^{\frac{1}{2}}
\end{equation}
in the complex plane $\mathbb{C}.$ In the polar coordinates system, this equation takes the form
$$f_{\theta}=i\alpha^{\frac{1}{2}}re^{i\theta}|J_{f}|^{\frac{1}{2}}.$$
It's easy to check that the mapping $$f=\begin{cases}|z|^{\frac{1}{\alpha}-1}z,\,\,z\neq0,\\
0,\,\,z=0\end{cases}$$
is regular homeomorphism and  belongs to Sobolev class $W_{\rm loc}^{1,2}(\mathbb{C})$.
We show that $f$ is a solution of the equation \eqref{1111}. We write this mapping in the polar coordinates $f(re^{i\theta})=r^{\frac{1}{\alpha}}e^{i\theta}$.
The partial derivatives of $f$ by $r$ and $\theta$ are
\begin{equation*}
f_{r}=\frac{1}{\alpha}\,r^{\frac{1-\alpha}{\alpha}}e^{i\theta}, \quad f_{\theta}=ir^{\frac{1}{\alpha}}e^{i\theta}
\end{equation*}
and by \eqref{Jacob} we have $$J_f(re^{i\theta})=\frac{1}{\alpha}\,r^{\frac{2(1-\alpha)}{\alpha}}>0.$$
Next we find $$\sigma=\frac{f_{\theta}}{J_f^{\frac{1}{2}}}=i\alpha^{\frac{1}{2}}re^{i\theta}.$$
Consequently, $\sigma=i\alpha^{\frac{1}{2}}z$ and by the relation \eqref{sigmaK} we obtain $$\mathcal{K}(z)=-\frac{\sigma(z)}{i\,\overline{z}}=-\alpha^{\frac{1}{2}}\frac{z}{\overline{z}}$$
and 
$$\kappa(z_{0},r)=\frac{1}{2\pi r} \, \int\limits_{\gamma(z_{0}, r)} | \mathcal{K}(z)|^2 \, |dz|=\alpha, \quad z_{0}=0.$$
Obviously,  $\kappa(z_{0},r)$ satisfies condition of Corollary \ref{cor1}.

On the other hand, we have $$M_{f}(z_{0},R)=\max\limits_{|z-z_{0}|=R}|f(z)-f(z_{0})|=R^{\frac{1}{\alpha}}$$ and 
$$m_{f}(z_{0},1)=\min\limits_{|z-z_{0}|=1}|f(z)-f(z_{0})|=1.$$
It follows that $$\lim\limits_{R\rightarrow \infty}\frac{M_{f}(z_{0},R)}{R^{\frac{1}{\alpha}}}=1.$$
\end{example}

\begin{corollary}\label{cor2} If for some $\alpha>0$ the condition $|\mathcal{K}_{z_0} (z)|\leqslant\alpha$ holds for a.a. $z\in\{z\in\mathbb C\colon |z-z_0|\geqslant 1\}$, then 
$$
\liminf\limits_{R\to \infty}\frac{M_{f}(z_0,R)}{R^{\beta}}\geqslant m_{f}(z_0,1)>0,
$$
where $\beta=1/\alpha^{2}$ and $m_{f}(z_0,1)=\min\limits_{|z-z_0|=1}|f(z)-f(z_0)|.$
\end{corollary}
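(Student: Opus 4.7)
The plan is to deduce Corollary \ref{cor2} directly from Corollary \ref{cor1} by estimating $\kappa(z_0,r)$ via the pointwise bound on $\mathcal{K}_{z_0}$. The hypothesis $|\mathcal{K}_{z_0}(z)|\leq \alpha$ for a.a.\ $z$ in the exterior of the unit disk centered at $z_0$ is a condition on a planar set, so I first need to translate it into a bound that holds on almost every circle $\gamma(z_0,r)$ with $r\geq 1$.

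First, I would invoke the Fubini theorem in polar coordinates centered at $z_0$: the set $N\subset\{|z-z_0|\geq 1\}$ on which $|\mathcal{K}_{z_0}(z)|>\alpha$ has two-dimensional Lebesgue measure zero, hence for a.a.\ $r\geq 1$ its intersection with $\gamma(z_0,r)$ has one-dimensional measure zero. Consequently, for a.a.\ $r\geq 1$,
$$|\mathcal{K}_{z_0}(z)|^2\leq \alpha^2 \quad \text{for a.a.\ } z\in \gamma(z_0,r).$$

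Next, I would substitute this into the definition
$$\kappa(z_0,r)=\frac{1}{2\pi r}\int\limits_{\gamma(z_0,r)}|\mathcal{K}_{z_0}(z)|^2\,|dz|$$
and use that the length of $\gamma(z_0,r)$ equals $2\pi r$ to get $\kappa(z_0,r)\leq \alpha^2$ for a.a.\ $r\geq 1$. Now I am exactly in the situation of Corollary \ref{cor1} with the constant $\alpha$ there replaced by $\alpha^2$; applying that corollary yields
$$\liminf\limits_{R\to\infty}\frac{M_f(z_0,R)}{R^{1/\alpha^2}}\geq m_f(z_0,1)>0,$$
which is the claim with $\beta=1/\alpha^2$.

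No step looks genuinely hard: the only mild point is the Fubini reduction from an essential bound on a planar set to an essential bound on a.e.\ circle, which is completely standard. The rest is a one-line estimate followed by quoting Corollary \ref{cor1}.
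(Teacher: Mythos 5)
Your proposal is correct and follows exactly the route the paper intends: the paper states Corollary \ref{cor2} without proof as an immediate consequence of Corollary \ref{cor1}, via the observation that the pointwise bound $|\mathcal{K}_{z_0}(z)|\leqslant\alpha$ gives $\kappa(z_0,r)\leqslant\alpha^{2}$ for a.a.\ $r\geqslant 1$, so that Corollary \ref{cor1} applies with $\alpha$ replaced by $\alpha^{2}$ and yields the exponent $\beta=1/\alpha^{2}$. Your Fubini reduction from an a.e.\ planar bound to an a.e.\ bound on almost every circle is the standard justification and is carried out correctly.
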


Later on we denote
\begin{equation*}
e_{1}=e,\,e_{2}=e^{e},\,...,\,e_{k+1}=e^{e_{k}}
\end{equation*}
and
\begin{equation*}
\ln_{1}t=\ln t,\, \ln_{2}t=\ln\ln t,\,...,\,\ln_{k+1}t=\ln\ln_{k}t,
\end{equation*}
where $k\geqslant1$ are integer.

\begin{theorem}\label{th2} Let $f\colon  \mathbb C\to \mathbb
C$ be a regular homeomorphic solution of the equation (\ref{eqcomplexJ}) which belongs to  Sobolev class $W^{1, 2}_{\rm
loc}$ and $\alpha>0$. If $$\kappa(z_0,r)\leqslant\alpha\prod\limits_{k=1}^{N}\ln_{k}r$$ for a.a. $r\geqslant e_{N}$, then 
$$
\liminf\limits_{R\to \infty}\frac{M_{f}(z_0,R)}{(\ln_{N}R)^{1/\alpha}}\geqslant m_{f}(z_0,e_{N})>0,
$$
where $m_{f}(z_0,e_{N})=\min\limits_{|z-z_0|=e_{N}}|f(z)-f(z_0)|.$
\end{theorem}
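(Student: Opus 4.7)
The plan is to reduce this to Theorem \ref{th1} applied with $r_0 = e_N$, after replacing $\kappa(z_0,r)$ by the dominating bound $\alpha \prod_{k=1}^N \ln_k r$. Concretely, Theorem \ref{th1} gives
$$
\liminf_{R\to\infty} M_f(z_0,R)\exp\!\left(-\int_{e_N}^R \frac{dr}{r\,\kappa(z_0,r)}\right) \geqslant m_f(z_0,e_N) > 0,
$$
so everything reduces to showing
$$
\int_{e_N}^R \frac{dr}{r\,\kappa(z_0,r)} \;\geqslant\; \frac{1}{\alpha}\,\ln_{N+1} R + o(1)
$$
(or more precisely $\ln_{N+1} R$ exactly, with a suitable lower limit), because then
$$
\exp\!\left(-\int_{e_N}^R \frac{dr}{r\,\kappa(z_0,r)}\right) \leqslant (\ln_N R)^{-1/\alpha},
$$
and multiplying through by $(\ln_N R)^{1/\alpha}/(\ln_N R)^{1/\alpha}$ inside the liminf yields exactly the stated conclusion.

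The key computation I would carry out is the iterated-logarithm antiderivative identity. By induction on $N$ one checks
$$
\frac{d}{dr}\ln_{N+1} r \;=\; \frac{1}{r\,\prod_{k=1}^{N}\ln_k r},
$$
since $(\ln_{N+1} r)' = (\ln_N r)'/\ln_N r$ and $(\ln_N r)' = 1/(r\prod_{k=1}^{N-1}\ln_k r)$ by the chain rule. Using the hypothesis $\kappa(z_0,r) \leqslant \alpha \prod_{k=1}^N \ln_k r$ for a.a.\ $r \geqslant e_N$,
$$
\int_{e_N}^R \frac{dr}{r\,\kappa(z_0,r)} \;\geqslant\; \frac{1}{\alpha}\int_{e_N}^R \frac{dr}{r\prod_{k=1}^N \ln_k r} \;=\; \frac{1}{\alpha}\bigl(\ln_{N+1} R - \ln_{N+1} e_N\bigr).
$$
A short check shows $\ln_N e_N = 1$ for every $N \geqslant 1$ (induction: $\ln_1 e_1 = 1$, and if $\ln_N e_N = 1$ then $\ln_{N+1} e_{N+1} = \ln\ln_N e^{e_N} = \ln(\ln_{N-1} e_N \cdot \text{chain term})$ — done cleanly by the same chain rule), hence $\ln_{N+1} e_N = \ln 1 = 0$ and the integral lower bound becomes simply $\tfrac{1}{\alpha}\ln_{N+1} R$.

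Assembling the pieces, from Theorem \ref{th1} with $r_0 = e_N$ we obtain
$$
m_f(z_0,e_N) \;\leqslant\; \liminf_{R\to\infty} M_f(z_0,R)\exp\!\left(-\tfrac{1}{\alpha}\ln_{N+1} R\right) \;=\; \liminf_{R\to\infty} \frac{M_f(z_0,R)}{(\ln_N R)^{1/\alpha}},
$$
which is precisely the claim. The only non-routine step is the antiderivative identity together with the verification $\ln_{N+1} e_N = 0$; everything else is a direct application of Theorem \ref{th1} and monotonicity of the exponential. I do not expect a genuine obstacle — the structure mirrors Corollary \ref{cor1}, which is the $N=0$ case (with the empty product replaced by $1$ and $\ln_0 R$ interpreted as $R$).
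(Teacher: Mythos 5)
Your proposal is correct and follows essentially the same route as the paper: apply Theorem \ref{th1} with $r_0=e_N$ and bound the integral $\int_{e_N}^R \frac{dr}{r\,\kappa(z_0,r)}$ from below by $\frac{1}{\alpha}\ln_{N+1}R=\ln(\ln_N R)^{1/\alpha}$, the only cosmetic difference being that the paper evaluates the iterated-logarithm integral via the substitution $t=\ln_N r$ (using $\ln_N e_N=1$) while you invoke the equivalent antiderivative identity for $\ln_{N+1}r$.
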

\begin{proof}
Note that 
$$\int\limits_{e_{N}}^{R}\frac{dr}{r\,\kappa(z_0,r)}\geqslant\frac{1}{\alpha}\int\limits_{e_{N}}^{R}\frac{dr}{r\,\prod\limits_{k=1}^{N}\ln_{k}r}
=\frac{1}{\alpha}\int\limits_{1}^{\ln_{N}R}\frac{dt}{t}=\ln(\ln_{N}R)^{1/\alpha}.$$
Hence, 
$$\exp\left(-  \int\limits_{e_{N}}^R \,\frac{dr}{r \,  \kappa(z_0,r)}\right)\leqslant\exp\left(-\ln(\ln_{N}R)^{1/\alpha}\right)=\frac{1}{(\ln_{N}R)^{1/\alpha}}.$$
Thus, by Theorem \ref{th1}, we obtain
$$
\liminf\limits_{R\to \infty}\frac{M_{f}(z_0,R)}{(\ln_{N}R)^{1/\alpha}}
\geqslant\liminf\limits_{R\to \infty}\frac{M_{f}(z_0,R)}{\exp\left(\int\limits_{e_{N}}^R \,\frac{dr}{r \,  \kappa(z_0,r)}\right)}\geqslant m_{f}(z_0,e_{N}).
$$
\end{proof}

\begin{corollary}\label{cor4} If for some $\alpha>0$ $$|\mathcal{K}_{z_0} (z)|\leqslant\alpha\left(\prod\limits_{k=1}^{N}\ln_{k}|z-z_0|\right)^{1/2}$$ for a.a. $z\in\{z\in\mathbb C\colon |z-z_0|\geqslant e_{N}\}$, then 
$$
\liminf\limits_{R\to \infty}\frac{M_{f}(z_0,R)}{(\ln_{N}R)^{\beta}}\geqslant m_{f}(z_0,e_{N})>0,
$$
where $\beta=1/\alpha^{2}$ and $m_{f}(z_0,e_{N})=\min\limits_{|z-z_0|=e_{N}}|f(z)-f(z_0)|.$
\end{corollary}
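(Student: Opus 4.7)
The plan is to reduce Corollary \ref{cor4} to Theorem \ref{th2} by converting the pointwise hypothesis on $|\mathcal{K}_{z_0}|$ into the integral/radial hypothesis on $\kappa(z_0,r)$ that Theorem \ref{th2} requires.

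First I would restrict attention to $r\geqslant e_N$ and parametrize the circle $\gamma(z_0,r)$ by $z=z_0+re^{i\theta}$, so that $|z-z_0|=r$ for every point of the circle. Under this parametrization the hypothesis $|\mathcal{K}_{z_0}(z)|\leqslant \alpha\bigl(\prod_{k=1}^{N}\ln_k|z-z_0|\bigr)^{1/2}$ becomes the constant-in-$\theta$ bound
\begin{equation*}
|\mathcal{K}_{z_0}(z_0+re^{i\theta})|^{2}\leqslant \alpha^{2}\prod_{k=1}^{N}\ln_k r \qquad \text{for a.a. } \theta\in[0,2\pi].
\end{equation*}

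Next I would insert this into the definition of $\kappa(z_0,r)$. Since $|dz|=r\,d\theta$ along $\gamma(z_0,r)$, the integrand is pointwise dominated by a quantity that is constant on the circle, and hence
\begin{equation*}
\kappa(z_0,r)=\frac{1}{2\pi r}\int_{\gamma(z_0,r)}|\mathcal{K}_{z_0}(z)|^{2}\,|dz|\leqslant\frac{1}{2\pi r}\cdot 2\pi r\cdot \alpha^{2}\prod_{k=1}^{N}\ln_k r=\alpha^{2}\prod_{k=1}^{N}\ln_k r
\end{equation*}
for a.a.\ $r\geqslant e_N$. This is precisely the hypothesis of Theorem \ref{th2} with the constant $\alpha$ there replaced by $\alpha^{2}$.

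Finally, applying Theorem \ref{th2} with this substituted constant yields
\begin{equation*}
\liminf_{R\to\infty}\frac{M_f(z_0,R)}{(\ln_N R)^{1/\alpha^{2}}}\geqslant m_f(z_0,e_N)>0,
\end{equation*}
which is the conclusion with $\beta=1/\alpha^{2}$. There is no real obstacle here; the corollary is a direct packaging of Theorem \ref{th2} once the pointwise bound is converted to an angular-average bound on $\kappa(z_0,r)$, and the only point to be careful about is replacing $\alpha$ by $\alpha^{2}$ when invoking the theorem, which accounts for the exponent $\beta=1/\alpha^{2}$ in place of $1/\alpha$.
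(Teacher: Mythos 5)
Your proposal is correct and is essentially the argument the paper intends (the paper states Corollary~\ref{cor4} without proof as an immediate consequence of Theorem~\ref{th2}): squaring the pointwise bound gives $|\mathcal{K}_{z_0}(z)|^{2}\leqslant\alpha^{2}\prod_{k=1}^{N}\ln_{k}r$ on $\gamma(z_0,r)$, hence $\kappa(z_0,r)\leqslant\alpha^{2}\prod_{k=1}^{N}\ln_{k}r$ for a.a. $r\geqslant e_{N}$, and Theorem~\ref{th2} applied with $\alpha^{2}$ in place of $\alpha$ yields the exponent $\beta=1/\alpha^{2}$. Your handling of the a.e.\ conversion from the planar hypothesis to the circular averages via Fubini is the right (and only) point needing care, and you have it.
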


Letting $n=1$ in Theorem~\ref{th2}  gives

\begin{corollary}\label{cor3} If for some $\alpha>0$ the condition $$\kappa(z_0,r)\leqslant\alpha\ln r$$ holds for a.a. $r\geqslant e$,  then 
$$
\liminf\limits_{R\to \infty}\frac{M_{f}(z_0,R)}{(\ln R)^{1/\alpha}}\geqslant m_{f}(z_0,e)>0,
$$
where $m_{f}(z_0,e)=\min\limits_{|z-z_0|=e}|f(z)-f(z_0)|.$
\end{corollary}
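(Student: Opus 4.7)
The plan is to recognize that this statement is precisely the $N=1$ instance of Theorem~\ref{th2}, and to verify that the notation specializes cleanly. First I would note that by the definitions preceding Theorem~\ref{th2}, $e_1 = e$ and $\ln_1 r = \ln r$, so the product $\prod_{k=1}^{1}\ln_k r$ reduces to $\ln r$. Hence the hypothesis $\kappa(z_0,r) \leqslant \alpha \ln r$ for a.a.\ $r \geqslant e$ is exactly the hypothesis of Theorem~\ref{th2} with $N=1$.

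Applying Theorem~\ref{th2} with $N=1$ then yields
$$
\liminf_{R\to\infty}\frac{M_f(z_0,R)}{(\ln_1 R)^{1/\alpha}} \geqslant m_f(z_0,e_1) > 0,
$$
which, upon substituting $\ln_1 R = \ln R$ and $e_1 = e$, is the desired conclusion. There is no genuine obstacle here: the corollary is a direct specialization, and one does not need to redo the key integral estimate
$$
\int_{e}^{R} \frac{dr}{r\,\kappa(z_0,r)} \,\geqslant\, \frac{1}{\alpha}\int_{e}^{R}\frac{dr}{r\ln r} \,=\, \ln(\ln R)^{1/\alpha},
$$
since it is already subsumed in the computation carried out inside the proof of Theorem~\ref{th2}. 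The only thing worth double-checking is that the indexing conventions ($e_1 = e$, $\ln_1 = \ln$) agree with those fixed in the paragraph immediately before Theorem~\ref{th2}, which they do.
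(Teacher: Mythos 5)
Your proof is correct and matches the paper's own approach exactly: the paper derives Corollary~\ref{cor3} by setting $N=1$ in Theorem~\ref{th2} (the paper writes ``$n=1$'', evidently a typo for $N=1$), precisely the specialization you carry out with $e_1=e$ and $\ln_1 r=\ln r$. Nothing further is needed.
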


\begin{corollary}\label{cor5} If for some $\alpha>0$ the condition $$|\mathcal{K}_{z_0} (z)|\leqslant\alpha(\ln|z-z_0|)^{1/2}$$ holds for a.a. $z\in\{z\in\mathbb C\colon |z-z_0|\geqslant e\}$, then 
$$
\liminf\limits_{R\to \infty}\frac{M_{f}(z_0,R)}{(\ln R)^{\beta}}\geqslant m_{f}(z_0,e)>0,
$$
where $\beta=1/\alpha^{2}$ and $m_{f}(z_0,e)=\min\limits_{|z-z_0|=e}|f(z)-f(z_0)|.$
\end{corollary}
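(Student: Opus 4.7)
The plan is to reduce Corollary \ref{cor5} to Corollary \ref{cor3} by a direct pointwise-to-circle-average estimate on $\mathcal{K}_{z_0}$. The hypothesis controls $|\mathcal{K}_{z_0}(z)|$ pointwise on the exterior of $B(z_0,e)$, whereas Corollary \ref{cor3} is phrased in terms of $\kappa(z_0,r)$, which is the mean of $|\mathcal{K}_{z_0}|^2$ over the circle $\gamma(z_0,r)$. The work therefore boils down to translating between these two quantities.

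First, fix $r\geqslant e$. By Fubini's theorem applied to the exceptional set where the hypothesis fails, for a.a.\ $r\geqslant e$ the bound $|\mathcal{K}_{z_0}(z)|\leqslant \alpha(\ln|z-z_0|)^{1/2}$ holds for a.a.\ $z\in\gamma(z_0,r)$. Squaring this pointwise bound (which is constant in $\theta$ once $r$ is fixed, since $|z-z_0|=r$) gives $|\mathcal{K}_{z_0}(z)|^2\leqslant \alpha^2\ln r$ on $\gamma(z_0,r)$. Integrating over the circle of length $2\pi r$ and dividing by $2\pi r$ yields
$$\kappa(z_0,r)=\frac{1}{2\pi r}\int\limits_{\gamma(z_0,r)}|\mathcal{K}_{z_0}(z)|^2\,|dz|\leqslant \alpha^2\ln r$$
for a.a.\ $r\geqslant e$.

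Second, apply Corollary \ref{cor3} with $\alpha^{2}$ in place of $\alpha$. Its conclusion becomes
$$\liminf\limits_{R\to\infty}\frac{M_f(z_0,R)}{(\ln R)^{1/\alpha^2}}\geqslant m_f(z_0,e)>0,$$
which is exactly the asserted inequality, since $\beta=1/\alpha^{2}$.

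I do not anticipate any real obstacle here: the argument is just a one-line pointwise estimate feeding into an already-proved corollary. The only mildly delicate point is to justify that the pointwise hypothesis, stated for a.a.\ $z$ in an open set, yields the circle-wise bound for a.a.\ $r$, and this is a routine application of Fubini to the null set of bad points.
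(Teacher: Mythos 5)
Your proof is correct and follows exactly the route the paper intends (the paper states Corollary \ref{cor5} without proof, as an immediate consequence of Corollary \ref{cor3}): the pointwise bound on $|\mathcal{K}_{z_0}|$ gives $\kappa(z_0,r)\leqslant\alpha^{2}\ln r$ for a.a.\ $r\geqslant e$ by averaging over circles, and Corollary \ref{cor3} applied with $\alpha^{2}$ in place of $\alpha$ yields the exponent $\beta=1/\alpha^{2}$. The Fubini remark handling the null exceptional set is the right (and only) technical point, and you have treated it adequately.
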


\begin{example}
Assume that $\alpha>0$. Consider the equation
\begin{equation}\label{1111**}
f_{\overline{z}}\,-\frac{z}{\overline{z}}\,f_{z}=\mathcal{K}(z)\, |J_f(z)|^{\frac{1}{2}},
\end{equation}
where $$\mathcal{K}(z)=\begin{cases}-\alpha^{\frac{1}{2}}
(\ln|z|)^{\frac{1}{2}}(\ln\ln|z|)^{\frac{1}{2}}\frac{z}{\overline{z}},\,\,|z|\geqslant e^{e},\\
-\frac{z}{\overline{z}},\,\,|z|< e^{e},\end{cases}$$
in the complex plane $\mathbb{C}.$ In the polar coordinates system, this equation takes the form
$$f_{\theta}=\sigma|J_{f}|^{\frac{1}{2}},$$
where $$\sigma(re^{i\theta})=\begin{cases}i\alpha^{\frac{1}{2}}
(\ln r)^{\frac{1}{2}}(\ln\ln r)^{\frac{1}{2}}re^{i\theta},\,\,r\geqslant e^{e},\\
ire^{i\theta},\,\,0\leqslant r< e^{e}.\end{cases}$$
It's easy to check that the mapping  $$f=\begin{cases}(\ln\ln|z|)^{\frac{1}{\alpha}}\frac{z}{|z|},\,\,|z|\geqslant e^{e},\\
e^{-e}z,\,\,|z|< e^{e}\end{cases}$$
is regular homeomorphism and belongs to Sobolev class $W_{\rm loc}^{1,2}(\mathbb{C})$.
We show that $f$ is a solution of the equation \eqref{1111**}. We write this mapping in the polar coordinates $$f(re^{i\theta})=\begin{cases}(\ln\ln  r)^{\frac{1}{\alpha}}e^{i\theta},\,\,r\geqslant e^{e},\\
e^{-e}re^{i\theta},\,\,0\leqslant r< e^{e}.\end{cases}$$
The partial derivatives of $f$ by $r$ and $\theta$ are
\begin{equation*}
f_{r}=\begin{cases}\alpha^{-1}(\ln\ln r)^{\frac{1-\alpha}{\alpha}}
(\ln r)^{-1}r^{-1}e^{i\theta},\,\,r\geqslant e^{e},\\
e^{-e}e^{i\theta},\,\,0\leqslant r< e^{e}\end{cases}
\end{equation*}
and
$$ f_{\theta}=\begin{cases}i(\ln\ln r)^{\frac{1}{\alpha}}
e^{i\theta},\,\,r\geqslant e^{e},\\
ie^{-e}re^{i\theta},\,\,0\leqslant r< e^{e}.\end{cases}$$
Consequently by \eqref{Jacob} we find $$J_f(re^{i\theta})= \begin{cases}\alpha^{-1}(\ln\ln r)^{\frac{2-\alpha}{\alpha}}
(\ln r)^{-1}r^{-2},\,\,r\geqslant e^{e},\\
e^{-2e},\,\,0\leqslant r< e^{e}\end{cases}$$
and
$$\sigma(re^{i\theta})=\frac{f_{\theta}}{J_f^{\frac{1}{2}}}=\begin{cases}i\alpha^{\frac{1}{2}}
(\ln\ln r)^{\frac{1}{2}}(\ln r)^{\frac{1}{2}}re^{i\theta},\,\,r\geqslant e^{e},\\
ire^{i\theta},\,\,0\leqslant r< e^{e}.\end{cases}$$
Hence,
$$\sigma=\begin{cases}i\alpha^{\frac{1}{2}}
(\ln\ln|z|)^{\frac{1}{2}}(\ln|z|)^{\frac{1}{2}}z,\,\,|z|\geqslant e^{e},\\
iz,\,\,|z|< e^{e}.\end{cases}$$
Next due to the relation \eqref{sigmaK} 
$$\mathcal{K}(z)=-\frac{\sigma(z)}{i\,\overline{z}}=\begin{cases}-\alpha^{\frac{1}{2}}
(\ln\ln|z|)^{\frac{1}{2}}(\ln|z|)^{\frac{1}{2}}\frac{z}{\overline{z}},\,\,|z|\geqslant e^{e},\\
-\frac{z}{\overline{z}},\,\,|z|< e^{e}\end{cases}$$
and for $z_{0}=0$ we have 
$$\kappa(z_{0},r)=\frac{1}{2\pi r} \, \int\limits_{\gamma(z_{0}, r)} | \mathcal{K}(z)|^2 \, |dz|=
\begin{cases}\alpha \ln r \cdot\ln\ln r,\,\,r\geqslant e^{e},\\
1,\,\,0< r< e^{e}.\end{cases}$$
Note that $\kappa(z_{0},r)$ does not satisfy condition of Corollary \ref{cor3}, because
$$\lim\limits_{r\to\infty}\frac{\kappa(z_{0},r)}{\ln r}=\infty.$$

On the other hand, we have $M_{f}(z_{0},R)=(\ln\ln R)^{\frac{1}{\alpha}}$. 
It follows that $$\lim\limits_{R\rightarrow \infty}\frac{M_{f}(z_{0},R)}{(\ln R)^{\frac{1}{\alpha}}}=0.$$
\end{example}

\section{Non-existence theorems}

In this section we find sufficient conditions under which the equation \eqref{eqcomplexJ} has no homeomorphic regular solutions belonging to the Sobolev class $W^{1, 2}_{\rm loc}$.

\begin{theorem}\label{th3} Let $\mathcal{K}_{z_0}  \colon \mathbb{C} \to\mathbb{C}$ be a measurable function. Then there are no regular homeomorphic solutions $f\colon  \mathbb C\to \mathbb
C$ of the equation (\ref{eqcomplexJ}) from  Sobolev class $W^{1, 2}_{\rm
loc}$ with asymptotic condition
\begin{equation}\label{eq33}
\liminf\limits_{R\to \infty}M_{f}(z_0,R)\exp\left(-  \int\limits_{r_0}^R \,\frac{dr}{r \,  \kappa(z_0,r)}\right)=0
\end{equation}
for some $r_0>0$, where $M_{f}(z_0,R)=\max\limits_{|z-z_0|=R}|f(z)-f(z_0)|.$
\end{theorem}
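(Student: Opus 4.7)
The plan is to prove Theorem \ref{th3} by contradiction, directly leveraging Theorem \ref{th1}. Suppose, toward a contradiction, that a regular homeomorphic solution $f\colon \mathbb{C}\to\mathbb{C}$ of \eqref{eqcomplexJ} in $W^{1,2}_{\mathrm{loc}}$ exists and satisfies the asymptotic condition \eqref{eq33} for some $r_0>0$. Then Theorem \ref{th1} applies to $f$ at the same $r_0$ and yields
\begin{equation*}
\liminf_{R\to\infty} M_f(z_0,R)\exp\!\left(-\int_{r_0}^{R}\frac{dr}{r\,\kappa(z_0,r)}\right) \geqslant m_f(z_0,r_0).
\end{equation*}

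Next I would argue that $m_f(z_0,r_0)>0$ strictly. This is the only non-trivial ingredient and it uses the hypothesis that $f$ is a \emph{homeomorphism} of $\mathbb{C}$ onto its image: the circle $\gamma(z_0,r_0)$ is a compact set not containing $z_0$, so its image under $f$ is a compact set in $\mathbb{C}$ not containing $f(z_0)$. Hence the continuous function $z\mapsto |f(z)-f(z_0)|$ attains its minimum on $\gamma(z_0,r_0)$ at some point $z^*\neq z_0$, and by injectivity $f(z^*)\neq f(z_0)$. Therefore $m_f(z_0,r_0)=|f(z^*)-f(z_0)|>0$.

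Combining the two displayed facts with the assumed \eqref{eq33} gives
\begin{equation*}
0 \;=\; \liminf_{R\to\infty} M_f(z_0,R)\exp\!\left(-\int_{r_0}^{R}\frac{dr}{r\,\kappa(z_0,r)}\right) \;\geqslant\; m_f(z_0,r_0) \;>\; 0,
\end{equation*}
which is the desired contradiction. Hence no such solution can exist.

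There is essentially no technical obstacle here, since Theorem \ref{th1} already encapsulates the main analytic work (the isoperimetric inequality, the differential inequality for $S_f$, and the $N$-property for $W^{1,2}_{\mathrm{loc}}$ homeomorphisms). The only point to be careful about is the strict positivity of $m_f(z_0,r_0)$, which must be justified from the homeomorphism property rather than taken for granted; after that, the statement is purely a logical contrapositive of Theorem \ref{th1}.
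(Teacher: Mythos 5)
Your proposal is correct and follows essentially the same route as the paper: both argue by contradiction, applying Theorem \ref{th1} to conclude $m_f(z_0,r_0)$ would have to vanish, which is incompatible with $f$ being a homeomorphism. Your additional compactness/injectivity justification of $m_f(z_0,r_0)>0$ is a slightly more explicit version of what the paper leaves implicit (and is in any case already asserted in the statement of Theorem \ref{th1}).
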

\begin{proof}
Suppose that there is a regular homeomorphic solution\break $f\colon  \mathbb C\to \mathbb
C$  of the equation (\ref{eqcomplexJ}) from  Sobolev class $W^{1, 2}_{\rm
loc}$ for which the condition \eqref{eq33} is satisfied. Then, by the Theorem \ref{th1}, we get
$$m_{f}(z_0,r_0)=\min\limits_{|z-z_0|=r_0}|f(z)-f(z_0)|=0.$$ 
This contradicts the fact that the mapping $f$ is homeomorphic.
\end{proof}

\begin{corollary} Let $\mathcal{K}_{z_0}  \colon \mathbb{C} \to\mathbb{C}$ be a measurable function  and $\alpha>0$. If $\kappa(z_0,r)\leqslant\alpha$ for a.a. $r\geqslant 1$, then there are no regular homeomorphic solutions $f\colon  \mathbb C\to \mathbb
C$ of the equation (\ref{eqcomplexJ}) from  Sobolev class $W^{1, 2}_{\rm
loc}$ with asymptotic condition
\begin{equation*}
\liminf\limits_{R\to \infty}\frac{M_{f}(z_0,R)}{R^{1/\alpha}}=0,
\end{equation*}
where $M_{f}(z_0,R)=\max\limits_{|z-z_0|=R}|f(z)-f(z_0)|.$
\end{corollary}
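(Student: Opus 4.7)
The plan is to deduce this corollary as a direct consequence of Theorem~\ref{th3} by estimating the exponential weight appearing in the asymptotic condition \eqref{eq33} in terms of a power of $R$. First I will set $r_0=1$ and use the hypothesis $\kappa(z_0,r)\leqslant\alpha$ for a.a.\ $r\geqslant 1$ to bound the integral from below:
\begin{equation*}
\int\limits_{1}^{R}\frac{dr}{r\,\kappa(z_0,r)}\;\geqslant\;\frac{1}{\alpha}\int\limits_{1}^{R}\frac{dr}{r}\;=\;\frac{\ln R}{\alpha},
\end{equation*}
and then exponentiate to get $\exp\!\bigl(-\int_{1}^{R}\!dr/(r\kappa(z_0,r))\bigr)\leqslant R^{-1/\alpha}$ for all $R>1$.

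Next I will argue by contradiction. Assuming that a regular homeomorphic solution $f\in W^{1,2}_{\rm loc}$ of \eqref{eqcomplexJ} with $\liminf_{R\to\infty}M_f(z_0,R)/R^{1/\alpha}=0$ exists, I will multiply the nonnegative quantity $M_f(z_0,R)$ through the previous estimate to obtain
\begin{equation*}
0\;\leqslant\;M_f(z_0,R)\exp\!\left(-\int\limits_{1}^{R}\frac{dr}{r\,\kappa(z_0,r)}\right)\;\leqslant\;\frac{M_f(z_0,R)}{R^{1/\alpha}}.
\end{equation*}
Passing to the lower limit as $R\to\infty$ forces the middle expression to vanish, which is precisely the hypothesis \eqref{eq33} of Theorem~\ref{th3} with $r_0=1$. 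Theorem~\ref{th3} then rules out the existence of such an $f$, giving the desired contradiction.

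There is essentially no serious obstacle in this argument: the corollary is a one-line specialization of the non-existence theorem after inserting the trivial reciprocal estimate for $\kappa$. The only minor point worth noting is that the bound $\kappa(z_0,r)\leqslant\alpha$ is assumed only for a.a.\ $r\geqslant 1$, but since the integrand $1/(r\kappa(z_0,r))$ is positive, modifying it on a set of measure zero does not affect the Lebesgue integral, so the lower bound $(\ln R)/\alpha$ is still valid.
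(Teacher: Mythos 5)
Your proposal is correct and follows exactly the route the paper intends: the corollary is stated without proof as an immediate specialization of Theorem~\ref{th3}, obtained by taking $r_0=1$, using $\kappa(z_0,r)\leqslant\alpha$ to get $\exp\bigl(-\int_1^R dr/(r\kappa(z_0,r))\bigr)\leqslant R^{-1/\alpha}$, and observing that the hypothesis $\liminf_{R\to\infty}M_f(z_0,R)/R^{1/\alpha}=0$ then forces condition \eqref{eq33}. This mirrors how Corollary~\ref{cor1} is derived from Theorem~\ref{th1}, so there is nothing to add.
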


\begin{corollary} Let $\mathcal{K}_{z_0}  \colon \mathbb{C} \to\mathbb{C}$ be a measurable function and $\alpha>0$. If $|\mathcal{K}_{z_0} (z)|\leqslant\alpha$ for a.a. $z\in\{z\in\mathbb C\colon |z-z_0|\geqslant 1\}$, then there are no regular homeomorphic solutions $f\colon  \mathbb C\to \mathbb
C$ of the equation (\ref{eqcomplexJ}) from  Sobolev class $W^{1, 2}_{\rm
loc}$ with asymptotic condition
\begin{equation*}
\liminf\limits_{R\to \infty}\frac{M_{f}(z_0,R)}{R^{\beta}}=0,
\end{equation*}
where  $\beta=1/\alpha^{2}$.
\end{corollary}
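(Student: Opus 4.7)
The plan is to reduce this corollary to the preceding one (the one assuming $\kappa(z_0,r)\leqslant\alpha$ for a.a.\ $r\geqslant 1$) by converting the pointwise bound on $|\mathcal{K}_{z_0}|$ into an averaged bound on $\kappa$. The only nontrivial point is recalling that $\kappa$ is built from $|\mathcal{K}_{z_0}|^{2}$, so a pointwise bound $|\mathcal{K}_{z_0}|\leqslant\alpha$ produces a bound with $\alpha^{2}$ on $\kappa$, which accounts for the squaring in $\beta=1/\alpha^{2}$.

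First I would rewrite the defining integral in angular form:
$$
\kappa(z_0,r)=\frac{1}{2\pi r}\int\limits_{\gamma(z_0,r)}|\mathcal{K}_{z_0}(z)|^{2}\,|dz|=\frac{1}{2\pi}\int\limits_0^{2\pi}\bigl|\mathcal{K}_{z_0}(z_0+re^{i\theta})\bigr|^{2}\,d\theta.
$$
By Fubini's theorem applied to the hypothesis $|\mathcal{K}_{z_0}(z)|\leqslant\alpha$ for a.a.\ $z$ with $|z-z_0|\geqslant 1$, for a.a.\ $r\geqslant 1$ the estimate $|\mathcal{K}_{z_0}(z_0+re^{i\theta})|\leqslant\alpha$ holds for a.a.\ $\theta\in[0,2\pi)$. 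Inserting this into the identity above gives $\kappa(z_0,r)\leqslant\alpha^{2}$ for a.a.\ $r\geqslant 1$.

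Second, I would invoke the preceding corollary (nonexistence under the integral hypothesis $\kappa(z_0,r)\leqslant c$) with constant $c=\alpha^{2}$. This yields directly that there is no regular homeomorphic solution $f\colon\mathbb C\to\mathbb C$ of \eqref{eqcomplexJ} from $W^{1,2}_{\mathrm{loc}}$ with
$$
\liminf\limits_{R\to\infty}\frac{M_{f}(z_0,R)}{R^{1/\alpha^{2}}}=0,
$$
which is exactly the stated conclusion with $\beta=1/\alpha^{2}$. Alternatively, one could bypass the intermediate corollary and feed the estimate $\kappa(z_0,r)\leqslant\alpha^{2}$ straight into Theorem~\ref{th3}, noting that $\int_{1}^{R}\frac{dr}{r\,\kappa(z_0,r)}\geqslant \alpha^{-2}\ln R$ so that $\exp(-\int_{1}^{R}dr/(r\kappa))\leqslant R^{-1/\alpha^{2}}$.

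No real obstacle is expected; the only thing to watch is the Fubini step, i.e., that the pointwise $L^{\infty}$ bound on $\mathcal{K}_{z_0}$ in the exterior $\{|z-z_0|\geqslant 1\}$ does transfer to an inequality along almost every circle $\gamma(z_0,r)$, which is standard for a.e.\ statements of two-variable functions in polar coordinates.
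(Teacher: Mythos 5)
Your proposal is correct and follows exactly the route the paper intends (the corollary is stated without proof, but the evident argument is precisely yours): the pointwise bound $|\mathcal{K}_{z_0}(z)|\leqslant\alpha$ gives $\kappa(z_0,r)\leqslant\alpha^{2}$ for a.a.\ $r\geqslant 1$ since $\kappa$ averages $|\mathcal{K}_{z_0}|^{2}$, and then the preceding corollary (or Theorem~\ref{th3} directly) applies with $\alpha^{2}$ in place of $\alpha$, yielding the exponent $\beta=1/\alpha^{2}$. The Fubini step you flag is the standard transfer of an a.e.\ bound to a.e.\ circles and poses no difficulty.
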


\begin{corollary} Let $\mathcal{K}_{z_0}  \colon \mathbb{C} \to\mathbb{C}$ be a measurable function and $\alpha>0$. If $$\kappa(z_0,r)\leqslant\alpha\prod\limits_{k=1}^{N}\ln_{k}r$$ for a.a. $r\geqslant e_{N}$, then there are no regular homeomorphic solutions\break $f\colon  \mathbb C\to \mathbb
C$ of the equation (\ref{eqcomplexJ}) from  Sobolev class $W^{1, 2}_{\rm
loc}$ with asymptotic condition
\begin{equation*}
\liminf\limits_{R\to \infty}\frac{M_{f}(z_0,R)}{(\ln_{N}R)^{1/\alpha}}=0,
\end{equation*}
where $M_{f}(z_0,R)=\max\limits_{|z-z_0|=R}|f(z)-f(z_0)|.$
\end{corollary}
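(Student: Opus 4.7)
The statement is essentially the non-existence analogue of Theorem~\ref{th2}, so the plan is to reduce it to Theorem~\ref{th3} via the same integral estimate that drives the proof of Theorem~\ref{th2}. Concretely, I would take the universal $r_{0}$ appearing in Theorem~\ref{th3} to be $r_{0}=e_{N}$ and rewrite the hypothesized asymptotic condition on $M_{f}(z_{0},R)/(\ln_{N}R)^{1/\alpha}$ in terms of the quantity that Theorem~\ref{th3} forbids, namely $M_{f}(z_{0},R)\exp\!\left(-\int_{r_{0}}^{R}\frac{dr}{r\,\kappa(z_{0},r)}\right)$.

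The key computation is the lower bound on the integral. Using $\kappa(z_{0},r)\leqslant\alpha\prod_{k=1}^{N}\ln_{k}r$ for a.a.\ $r\geqslant e_{N}$, together with the substitution $t=\ln_{N}r$ as in the proof of Theorem~\ref{th2}, one obtains
\begin{equation*}
\int\limits_{e_{N}}^{R}\frac{dr}{r\,\kappa(z_{0},r)}\;\geqslant\;\frac{1}{\alpha}\int\limits_{e_{N}}^{R}\frac{dr}{r\,\prod_{k=1}^{N}\ln_{k}r}\;=\;\frac{1}{\alpha}\,\ln(\ln_{N}R),
\end{equation*}
which exponentiates to $\exp\!\left(-\int_{e_{N}}^{R}\frac{dr}{r\,\kappa(z_{0},r)}\right)\leqslant (\ln_{N}R)^{-1/\alpha}$.

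With this bound in hand, I would argue by contradiction. Suppose a regular homeomorphic solution $f\in W^{1,2}_{\rm loc}$ of \eqref{eqcomplexJ} exists with $\liminf_{R\to\infty}M_{f}(z_{0},R)/(\ln_{N}R)^{1/\alpha}=0$. Multiplying the above inequality by $M_{f}(z_{0},R)\geqslant 0$ yields
\begin{equation*}
0\;\leqslant\;M_{f}(z_{0},R)\exp\!\left(-\int\limits_{e_{N}}^{R}\frac{dr}{r\,\kappa(z_{0},r)}\right)\;\leqslant\;\frac{M_{f}(z_{0},R)}{(\ln_{N}R)^{1/\alpha}},
\end{equation*}
so taking $\liminf$ as $R\to\infty$ forces the middle quantity to vanish, i.e.\ condition \eqref{eq33} of Theorem~\ref{th3} holds with $r_{0}=e_{N}$. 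This contradicts Theorem~\ref{th3} and thus rules out the existence of such $f$.

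There is essentially no hard step here: the substance is already packaged in Theorems~\ref{th1}--\ref{th3}, and the only thing one has to do is match the hypothesis on $\kappa$ to the integral form \eqref{eq33}. The only point that requires minor care is checking that extending the lower limit of integration to $e_{N}$ is legitimate, which is fine because Theorem~\ref{th3} is stated as holding for \emph{some} $r_{0}>0$; so choosing $r_{0}=e_{N}$, for which the hypothesis on $\kappa$ applies throughout $[e_{N},R]$, closes the argument.
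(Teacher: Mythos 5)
Your proof is correct and takes exactly the route the paper intends: the corollary is stated without proof, but it evidently follows by combining the integral estimate from the proof of Theorem~\ref{th2} (giving $\exp(-\int_{e_N}^{R}\frac{dr}{r\,\kappa(z_0,r)})\leqslant(\ln_{N}R)^{-1/\alpha}$) with Theorem~\ref{th3} applied with $r_0=e_N$, which is precisely your argument.
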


\begin{corollary} Let $\mathcal{K}_{z_0}  \colon \mathbb{C} \to\mathbb{C}$ be a measurable function and $\alpha>0$. If $$|\mathcal{K}_{z_0} (z)|\leqslant\alpha\left(\prod\limits_{k=1}^{N}\ln_{k}|z-z_0|\right)^{1/2}$$ for a.a. $z\in\{z\in\mathbb C\colon |z-z_0|\geqslant e_{N}\}$, then there are no regular homeomor\-phic solutions $f\colon  \mathbb C\to \mathbb
C$ of the equation (\ref{eqcomplexJ}) from  Sobolev class $W^{1, 2}_{\rm
loc}$ with asymptotic condition
\begin{equation*}
\liminf\limits_{R\to \infty}\frac{M_{f}(z_0,R)}{(\ln_{N}R)^{\beta}}=0,
\end{equation*}
where  $\beta=1/\alpha^{2}$.
\end{corollary}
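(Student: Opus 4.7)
The strategy is to reduce this statement to the immediately preceding corollary (the non-existence result under the hypothesis $\kappa(z_0,r)\leqslant\alpha\prod_{k=1}^{N}\ln_{k}r$) by converting the pointwise bound on $|\mathcal{K}_{z_0}(z)|$ into an integral bound on $\kappa(z_0,r)$.

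First, I would square the assumed pointwise estimate: for a.a.\ $z$ with $|z-z_0|\geqslant e_{N}$,
\begin{equation*}
|\mathcal{K}_{z_0}(z)|^{2}\leqslant\alpha^{2}\prod_{k=1}^{N}\ln_{k}|z-z_0|.
\end{equation*}
Since on the circle $\gamma(z_0,r)$ one has $|z-z_0|=r$, the product $\prod_{k=1}^{N}\ln_{k}|z-z_0|$ is constant and equal to $\prod_{k=1}^{N}\ln_{k}r$. Integrating over $\gamma(z_0,r)$ (whose length is $2\pi r$) and dividing by $2\pi r$, one obtains, for a.a.\ $r\geqslant e_{N}$,
\begin{equation*}
\kappa(z_0,r)=\frac{1}{2\pi r}\int\limits_{\gamma(z_0,r)}|\mathcal{K}_{z_0}(z)|^{2}\,|dz|\leqslant\alpha^{2}\prod_{k=1}^{N}\ln_{k}r.
\end{equation*}

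Next, I would apply the preceding non-existence corollary with the constant $\alpha$ there replaced by $\alpha^{2}$. That corollary says that under such an estimate on $\kappa(z_0,r)$ no regular homeomorphic solution $f\in W^{1,2}_{\mathrm{loc}}$ of \eqref{eqcomplexJ} can satisfy
\begin{equation*}
\liminf_{R\to\infty}\frac{M_{f}(z_0,R)}{(\ln_{N}R)^{1/\alpha^{2}}}=0,
\end{equation*}
which is exactly the asymptotic condition in the statement since $\beta=1/\alpha^{2}$.

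There is no real obstacle here: the whole content is the passage from the pointwise bound on $\mathcal{K}_{z_0}$ to the integral bound on $\kappa(z_0,r)$, and this is immediate because the majorant depends only on $|z-z_0|$ and hence is constant on $\gamma(z_0,r)$. The only point worth double-checking is that the inequality $\kappa(z_0,r)\leqslant\alpha^{2}\prod_{k=1}^{N}\ln_{k}r$ holds for a.a.\ $r\geqslant e_{N}$, which follows from the a.e.\ assumption on $|\mathcal{K}_{z_0}|$ together with Fubini's theorem in polar coordinates.
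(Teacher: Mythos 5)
Your argument is correct and matches the paper's (implicit) derivation: the paper states this corollary without proof, and the intended route is exactly yours — square the pointwise bound on $|\mathcal{K}_{z_0}|$, average over $\gamma(z_0,r)$ to get $\kappa(z_0,r)\leqslant\alpha^{2}\prod_{k=1}^{N}\ln_{k}r$, and invoke the preceding $\kappa$-based non-existence corollary with $\alpha$ replaced by $\alpha^{2}$, giving the exponent $\beta=1/\alpha^{2}$. No issues.
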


\begin{corollary} Let $\mathcal{K}_{z_0}  \colon \mathbb{C} \to\mathbb{C}$ be a measurable function  and $\alpha>0$. If $$\kappa(z_0,r)\leqslant\alpha\ln r$$ for a.a. $r\geqslant e$, then there are no regular homeomorphic solutions $f\colon  \mathbb C\to \mathbb
C$ of the equation (\ref{eqcomplexJ}) from  Sobolev class $W^{1, 2}_{\rm
loc}$ with asymptotic condition
\begin{equation*}
\liminf\limits_{R\to \infty}\frac{M_{f}(z_0,R)}{(\ln R)^{1/\alpha}}=0,
\end{equation*}
where $M_{f}(z_0,R)=\max\limits_{|z-z_0|=R}|f(z)-f(z_0)|.$
\end{corollary}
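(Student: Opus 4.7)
The plan is to reduce the statement to Theorem~\ref{th3} (the abstract non-existence theorem) by showing that the hypothesized bound on $\kappa(z_0,r)$ is strong enough to translate the growth assumption $\liminf_{R\to\infty}M_f(z_0,R)/(\ln R)^{1/\alpha}=0$ into the abstract asymptotic condition \eqref{eq33} with $r_0=e$. Once this implication is in place, the desired non-existence is immediate, by the same contradiction-with-homeomorphy argument used to prove Theorem~\ref{th3}.

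First I would choose $r_0=e$ and estimate the integral appearing in \eqref{eq33} from below using the pointwise bound $\kappa(z_0,r)\leqslant\alpha\ln r$ valid for a.a.\ $r\geqslant e$. Since $\ln r>0$ on this range, one obtains
\[
\int_{e}^{R}\frac{dr}{r\,\kappa(z_0,r)}\;\geqslant\;\frac{1}{\alpha}\int_{e}^{R}\frac{dr}{r\,\ln r}\;=\;\frac{1}{\alpha}\ln\ln R,
\]
which is precisely the $N=1$ instance of the computation appearing in the proof of Theorem~\ref{th2}. Exponentiating and multiplying by $M_f(z_0,R)$ gives
\[
M_f(z_0,R)\exp\!\left(-\int_{e}^{R}\frac{dr}{r\,\kappa(z_0,r)}\right)\;\leqslant\;\frac{M_f(z_0,R)}{(\ln R)^{1/\alpha}}.
\]

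Taking $\liminf_{R\to\infty}$ on both sides and invoking the standing hypothesis that the right-hand side has liminf zero, we conclude that condition \eqref{eq33} holds with $r_0=e$. By Theorem~\ref{th3}, no regular homeomorphic solution $f$ of equation \eqref{eqcomplexJ} from $W^{1,2}_{\rm loc}$ can satisfy this, which is the desired contradiction.

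There is essentially no serious obstacle here, since the statement is a routine repackaging of Theorem~\ref{th3} combined with the elementary logarithmic integral estimate already carried out in Theorem~\ref{th2}. The only points requiring any care are: (i) that the hypothesis on $\kappa(z_0,r)$ holds merely a.e.\ in $r$, which is harmless because the integrals in \eqref{eq33} are insensitive to null sets; and (ii) that the lower bound $\ln r>0$ for $r\geqslant e$ permits the termwise inversion $1/\kappa(z_0,r)\geqslant 1/(\alpha\ln r)$ without sign issues.
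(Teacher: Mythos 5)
Your proof is correct and follows exactly the route the paper intends: the corollary is an immediate consequence of Theorem~\ref{th3} once the bound $\kappa(z_0,r)\leqslant\alpha\ln r$ is converted, via the substitution $t=\ln r$, into the estimate $\exp\bigl(-\int_e^R \frac{dr}{r\,\kappa(z_0,r)}\bigr)\leqslant(\ln R)^{-1/\alpha}$, which is the $N=1$ case of the computation in Theorem~\ref{th2}. Nothing is missing.
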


\begin{corollary} Let $\mathcal{K}_{z_0}  \colon \mathbb{C} \to\mathbb{C}$ be a measurable function and $\alpha>0$. If $$|\mathcal{K}_{z_0} (z)|\leqslant\alpha (\ln|z-z_0|)^{1/2}$$ for a.a. $z\in\{z\in\mathbb C\colon |z-z_0|\geqslant e\}$, then  there are no regular homeomorphic solutions $f\colon  \mathbb C\to \mathbb
C$ of the equation (\ref{eqcomplexJ}) from  Sobolev class $W^{1, 2}_{\rm
loc}$ with asymptotic condition
\begin{equation*}
\liminf\limits_{R\to \infty}\frac{M_{f}(z_0,R)}{(\ln R)^{\beta}}=0,
\end{equation*}
where  $\beta=1/\alpha^{2}$.
\end{corollary}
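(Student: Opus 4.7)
The plan is to derive this corollary from the immediately preceding one in the section by upgrading the pointwise bound on $|\mathcal{K}_{z_0}|$ to a bound on the circular mean $\kappa(z_0,r)$. Squaring the hypothesis gives $|\mathcal{K}_{z_0}(z)|^2 \leq \alpha^2 \ln|z-z_0|$ for a.a.\ $z$ in $\{z\in\mathbb{C}:|z-z_0|\geq e\}$. By Fubini's theorem, this pointwise estimate descends to $|dz|$-a.a.\ points of the circle $\gamma(z_0,r)$ for a.a.\ $r \geq e$, so from the definition
\[
\kappa(z_0,r) \;=\; \frac{1}{2\pi r}\int\limits_{\gamma(z_0,r)} |\mathcal{K}_{z_0}(z)|^2\,|dz|
\]
one obtains $\kappa(z_0,r) \leq \alpha^2 \ln r$ for a.a.\ $r \geq e$.

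Setting $\tilde\alpha := \alpha^2$, the resulting bound $\kappa(z_0,r) \leq \tilde\alpha \ln r$ is exactly the hypothesis of the immediately preceding nonexistence corollary (the $\kappa$-version corresponding to Corollary~\ref{cor3}), which rules out regular homeomorphic $W^{1,2}_{\rm loc}$-solutions satisfying $\liminf_{R\to\infty} M_f(z_0,R)/(\ln R)^{1/\tilde\alpha} = 0$. Since $1/\tilde\alpha = 1/\alpha^2 = \beta$, this is precisely the desired conclusion.

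I expect no substantial obstacle: the argument is a direct reduction via the circular mean, and the only minor technical point is the Fubini step transferring the a.a.\ condition on $z$ to an a.a.\ condition on $r$. As an alternative route that bypasses the intermediate corollary, one can feed the bound $\kappa(z_0,r) \leq \alpha^2 \ln r$ directly into Theorem~\ref{th3}: since
\[
\int\limits_{e}^{R} \frac{dr}{r\,\kappa(z_0,r)} \;\geq\; \frac{1}{\alpha^2}\int\limits_{e}^{R}\frac{dr}{r\ln r} \;=\; \beta\,\ln\ln R,
\]
one has $\exp\bigl(-\int_e^R dr/(r\,\kappa(z_0,r))\bigr) \leq (\ln R)^{-\beta}$, so the assumption $\liminf_{R\to\infty} M_f(z_0,R)/(\ln R)^{\beta}=0$ forces the asymptotic condition \eqref{eq33}, contradicting Theorem~\ref{th3}.
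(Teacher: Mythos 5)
Your proof is correct and follows exactly the route the paper intends (the corollary is stated without proof, but the parallel derivation of Corollary~\ref{cor5} from Corollary~\ref{cor3} in Section~3 makes the intended argument clear): squaring the pointwise bound, averaging over the circle to get $\kappa(z_0,r)\leqslant\alpha^{2}\ln r$, and invoking the preceding $\kappa$-version of the nonexistence result (equivalently, Theorem~\ref{th3} directly). Both of your routes are valid and the Fubini remark handles the only technical subtlety.
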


\bigskip

\bigskip

\textbf{Acknowledgement}

This work was supported by a grant from the Simons Foundation (1290607, Ruslan Salimov, Mariia Stefanchuk).

\bigskip

CONTACT INFORMATION

\medskip

I.~Petkov \\Admiral Makarov National University of Shipbuilding,\\Ukraine, Mykolaiv, 9 Heroes of Ukraine Avenue, 54007,\\igorpetkov83@gmail.com

\medskip

R.~Salimov\\Institute of Mathematics of NAS of Ukraine,\\3 Tereschenkivska St., Kiev-4, 01024, Ukraine,\\ruslan.salimov1@gmail.com

\medskip
M.~Stefanchuk\\Institute of Mathematics of NAS of Ukraine,\\3 Tereschenkivska St., Kiev-4, 01024, Ukraine,\\stefanmv43@gmail.com

\end{document}